\newtheorem{theorem}{Theorem}[]
\newtheorem{lemma}[]{Lemma}
\theoremstyle{definition}
\newtheorem{example}{Example}
\theoremstyle{remark}
\newtheorem*{remark}{Remark}
\newtheorem*{problem}{Problem}
\DeclareMathOperator{\dist}{dist}
\DeclareMathOperator{\diam}{diam}
\DeclareMathOperator{\Fix}{Fix}
\begin{document}
\title[Uniformly Lipschitzian semigroups]{H\"{o}lder continuous retractions
and amenable semigroups of uniformly Lipschitzian mappings in Hilbert spaces}
\author[A. Wi\'{s}nicki]{Andrzej Wi\'{s}nicki}

\begin{abstract}
Suppose that $S$ is a left amenable semitopological semigroup. We prove that
if $\mathcal{S}=\left\{ T_{t}:t\in S\right\} $ is a uniformly $k$%
-Lipschitzian semigroup on a bounded closed and convex subset $C$ of a
Hilbert space and $k<\sqrt{2},$ then the set of fixed points of $\mathcal{S}$
is a H\"{o}lder continuous retract of $C$. This gives a qualitative
complement to the Ishihara-Takahashi fixed point existence theorem.
\end{abstract}

\dedicatory{
 Dedicated to Professors Kazimierz Goebel and Lech G\'{o}rniewicz
 \\ on the occasion of their 70th birthdays}

\subjclass[2010]{ 47H10; 47H20; 54C15}
\keywords{Amenable semigroup, Uniformly Lipschitzian mapping, H\"{o}lder
continuous retraction, Fixed point}
\address{Andrzej Wi\'{s}nicki, Institute of Mathematics, Maria Curie-Sk\l %
odowska University, 20-031 Lublin, Poland}
\email{awisnic@hektor.umcs.lublin.pl}
\maketitle

\section{Introduction}

Let $C$ be a nonempty bounded closed and convex subset of a Banach space $X$%
. A mapping $T:C\rightarrow C$ is said to be nonexpansive if
\begin{equation*}
\left\Vert Tx-Ty\right\Vert \leq \left\Vert x-y\right\Vert
\end{equation*}%
for $x,y\in C$. The study of the structure of the fixed-point set $\Fix %
T=\left\{ x\in C:Tx=x\right\} $ was initiated by R. Bruck (cf. \cite{Br1,
Br2}) who proved that if $T$ has a fixed point in every nonempty closed
convex subset of $C$ which is invariant under $T$, and $C$ is convex and
weakly compact or separable, then $\Fix T$ is a nonexpansive retract of $C$
(i.e., there exists a nonexpansive mapping $R:C\rightarrow \Fix T$ such that
$R_{\mid \Fix T}=I$). Notice that the set of fixed points of a $k$-Lipschitz
mapping can be very irregular for any $k>1$. Indeed, let $F$ be a nonempty
closed subset of $C$, $z\in F$, $\varepsilon >0$, and define
\begin{equation*}
Tx=x+\varepsilon \dist\left( x,F\right) \left( z-x\right) ,\ x\in C\text{.}
\end{equation*}%
Then the Lipschitz constant of $T$ tends to $1$ if $\varepsilon \rightarrow
0 $, but $\Fix T=F$ (see, e.g., \cite{SeWi}).

In 1973, Goebel and Kirk \cite{GoKi} introduced the class of uniformly
Lipschitzian mappings and proved a fixed point theorem which was later
studied by several authors. A mapping $T:C\rightarrow C$ is said to be $k$%
-uniformly Lipschitzian if%
\begin{equation*}
\left\Vert T^{n}x-T^{n}y\right\Vert \leq k\left\Vert x-y\right\Vert
\end{equation*}%
for every $x,y\in C$ and $n\in \mathbb{N}$. Recall that a Banach space $X$
is uniformly convex if $\delta _{X}(\varepsilon )>0$ for $\varepsilon >0,$
where $\delta _{X}:[0,2]\rightarrow \lbrack 0,1]$ is the modulus of
convexity of $X$ defined by

\begin{equation*}
\delta _{X}(\varepsilon )=\inf \{1-\left\Vert \frac{x+y}{2}\right\Vert
:\left\Vert x\right\Vert \leq 1,\left\Vert y\right\Vert \leq 1,\left\Vert
x-y\right\Vert \geq \varepsilon \}.
\end{equation*}

\begin{theorem}[cf. \protect\cite{GoKi}]
{\label{GoKi}} Let $X$ be a uniformly convex Banach space with the modulus
of convexity $\delta _{X}$ and let $C$ be a bounded, closed and convex
subset of $X$. Suppose $T:C\rightarrow C$ is uniformly $k$-Lipschitzian and
\begin{equation}
k\left( 1-\delta _{X}\left( \frac{1}{k}\right) \right) <1.  \label{gk}
\end{equation}%
Then $T$ has a fixed point in $C$.
\end{theorem}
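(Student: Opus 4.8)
The plan is to use the classical tool of \emph{asymptotic centers}, which is perfectly adapted to the uniformly convex setting. Fix an arbitrary $x\in C$ and consider its orbit $\{T^{n}x:n\in \mathbb{N}\}$. For $y\in C$ put $\varphi (y)=\limsup_{n}\left\Vert T^{n}x-y\right\Vert $, let $r=\inf_{y\in C}\varphi (y)$ be the asymptotic radius of the orbit relative to $C$, and let $c$ be its asymptotic center, i.e.\ the point at which $\varphi $ attains its infimum. Since $X$ is uniformly convex and $C$ is closed and convex, such a $c$ exists (and is unique); this is standard. If $r=0$, then $T^{n}x\to c$, and the $k$-Lipschitz continuity of $T$ (the case $n=1$ of uniform Lipschitzness) forces $Tc=c$, so we may assume $r>0$.

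The decisive observation is a self-improvement of the orbit built from the identity $T^{n}x=T^{m}(T^{n-m}x)$. For fixed $m$ and $n\geq m$,
\[
\left\Vert T^{n}x-T^{m}c\right\Vert =\left\Vert T^{m}(T^{n-m}x)-T^{m}c\right\Vert \leq k\,\left\Vert T^{n-m}x-c\right\Vert ,
\]
and taking $\limsup $ over $n$ (a shift does not change it) gives $\varphi (T^{m}c)\leq kr$ for every $m$. Thus every iterate $T^{m}c$ is asymptotically within $kr$ of the orbit, while $c$ itself realizes the radius $r\leq kr$. Now I would apply uniform convexity to the vectors $a_{n}=T^{n}x-c$ and $b_{n}=T^{n}x-T^{m}c$, whose difference is the constant $T^{m}c-c$: the midpoint $\tfrac{a_{n}+b_{n}}{2}=T^{n}x-\tfrac{c+T^{m}c}{2}$ satisfies, by the definition of $\delta _{X}$ applied with the common asymptotic bound $kr$,
\[
\varphi \Big( \tfrac{c+T^{m}c}{2}\Big) \leq kr\Big( 1-\delta _{X}\Big( \tfrac{\left\Vert c-T^{m}c\right\Vert }{kr}\Big) \Big) .
\]
Since $\tfrac{c+T^{m}c}{2}\in C$, the left-hand side is at least $r$, whence $\delta _{X}(\left\Vert c-T^{m}c\right\Vert /(kr))\leq 1-\tfrac{1}{k}$. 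This is exactly where hypothesis \eqref{gk} enters: \eqref{gk} is equivalent to $1-\tfrac{1}{k}<\delta _{X}(\tfrac{1}{k})$, so by monotonicity and continuity of $\delta _{X}$ there is a constant $h=h(k,\delta _{X})<1$ with $\left\Vert c-T^{m}c\right\Vert \leq hr$ for all $m$.

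To finish I would iterate the passage $x\mapsto c$. Set $x_{0}=x$ and let $x_{j+1}$ be the asymptotic center of the orbit of $x_{j}$, with $r_{j}$ the corresponding asymptotic radius. Applying the previous paragraph to $x_{j}$ gives $\left\Vert x_{j+1}-T^{m}x_{j+1}\right\Vert \leq hr_{j}$ for all $m$; in particular $r_{j+1}\leq \limsup_{m}\left\Vert T^{m}x_{j+1}-x_{j+1}\right\Vert \leq hr_{j}$, so $r_{j}\leq h^{j}r_{0}\to 0$. A triangle estimate $\left\Vert x_{j}-x_{j+1}\right\Vert \leq \limsup_{m}(\left\Vert x_{j}-T^{m}x_{j}\right\Vert +\left\Vert T^{m}x_{j}-x_{j+1}\right\Vert )\leq hr_{j-1}+r_{j}\leq 2h^{j}r_{0}$ shows that $(x_{j})$ is Cauchy and converges to some $c_{\ast }\in C$. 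Finally $\left\Vert c_{\ast }-Tc_{\ast }\right\Vert =0$, since $\left\Vert x_{j}-Tx_{j}\right\Vert \leq hr_{j-1}\to 0$ and $T$ is continuous, which yields the desired fixed point.

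I expect the main obstacle to be the uniform convexity step: correctly scaling the modulus $\delta _{X}$ against the asymptotic bound $kr$ (absorbing the $\limsup $ by an arbitrarily small slack $R=kr+\eta $ and letting $\eta \downarrow 0$), and verifying that \eqref{gk} is precisely the inequality guaranteeing the contraction factor $h<1$. The existence and uniqueness of asymptotic centers in uniformly convex spaces is the other ingredient I would invoke, but it is standard and can be quoted.
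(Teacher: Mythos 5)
The paper gives no proof of this theorem at all — it is quoted from \cite{GoKi} — so the comparison can only be with the classical Goebel--Kirk argument, and your proposal is essentially exactly that argument: take the asymptotic center $c$ of an orbit, use the modulus of convexity on the pairs $T^{n}x-c$, $T^{n}x-T^{m}c$ together with hypothesis (\ref{gk}) (equivalently $\delta _{X}(1/k)>1-1/k$) to get $\left\Vert c-T^{m}c\right\Vert \leq hr$ with $h<1$, and then iterate the center map to produce a Cauchy sequence converging to a fixed point. Your write-up is correct, including the two delicate points you flag (absorbing the $\limsup$ by a slack $kr+\eta $ before invoking $\delta _{X}$, and using monotonicity plus continuity of $\delta _{X}$ at $1/k$ to extract a uniform contraction factor $h$), so there is nothing to fix.
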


It was shown in \cite{SeWi} that under the assumptions of Theorem \ref{GoKi}%
, the fixed-point set $\Fix T$ is a (continuous) retract of $C.$ Recently, G%
\'{o}rnicki (cf. \cite{GoJ, Gosem}) proved several structural results
concerning uniformly Lipschitzian mappings but many questions remain open.
In \cite{PeFe}, P\'{e}rez Garc\'{\i}a and Fetter Nathansky gave conditions
under which $\Fix T$ is a H\"{o}lder continuous retract and applied them to
the study of n-periodic mappings in Hilbert spaces.

Notice that in a Hilbert space a solution to (\ref{gk}) gives $k<\sqrt{5}/2.$
Lifschitz \cite{Li} improved this estimation and showed that in a Hilbert
space a uniformly $k$-Lipschitzian mapping with $k<\sqrt{2}$ has a fixed
point. The Lifschitz theorem was generalized to uniformly $k$-Lipschitzian
semigroups in \cite{DoRa, IsTa, MiTa}. The aim of this note is to give a
qualitative complement to the above results in the case of left amenable
(semitopological) semigroups which partially extends a result of G\'{o}%
rnicki (cf. \cite[Cor. 14]{Gosem}). We show that if $\mathcal{S}=\left\{
T_{t}:t\in S\right\} $ is a uniformly $k$-Lipschitzian semigroup on $C$ and $%
k<\sqrt{2},$ then $\Fix\mathcal{S}=\bigcap_{t\in S}\left\{ x\in
C:T_{t}x=x\right\} ,$ the set of (common) fixed points of $\mathcal{S},$ is
a H\"{o}lder continuous retract of $C.$

\section{Fixed point theorem}

We start with the following variant of a well known result (see, e.g., \cite[%
Prop. 1.10]{BeLi}, \cite[Lemma 2.1]{PeFe}).

\begin{lemma}
\label{holder}Let $(X,d)$ be a complete bounded metric space and let $%
L:X\rightarrow X$ be a $k$-Lipschitz mapping. Suppose there exist $0<\gamma
<1$ and $c>0$ such that $d(L^{n+1}x,L^{n}x)\leq c\gamma ^{n}$ for every $%
x\in X$ and $n\in \mathbb{N}.$ Then $Rx=\lim_{n\rightarrow \infty }L^{n}x$
is a H\"{o}lder continuous mapping on $X.$
\end{lemma}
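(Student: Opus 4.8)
The plan is to prove the lemma in three stages: first establish that the limit defining $R$ exists, then extract a quantitative tail estimate for $d(Rx,L^{n}x)$, and finally run an interpolation argument (an optimization over the iterate index $n$) that converts the geometric summability hypothesis into a H\"{o}lder modulus.

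First I would show that the orbit $(L^{n}x)_{n}$ is Cauchy, uniformly in $x$. For $m>n$, the triangle inequality together with the hypothesis gives
\[
d(L^{m}x,L^{n}x)\le \sum_{j=n}^{m-1}d(L^{j+1}x,L^{j}x)\le c\sum_{j=n}^{m-1}\gamma^{j}\le \frac{c}{1-\gamma}\,\gamma^{n}.
\]
Since $0<\gamma<1$ and $X$ is complete, the limit $Rx=\lim_{n\to\infty}L^{n}x$ exists. Letting $m\to\infty$ and using continuity of the metric yields the tail bound $d(Rx,L^{n}x)\le \frac{c}{1-\gamma}\gamma^{n}$, valid for all $x$ and $n$.

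Next, for $x,y\in X$ I would insert the $n$-th iterates and use that $L$ is $k$-Lipschitz (hence $L^{n}$ is $k^{n}$-Lipschitz):
\[
d(Rx,Ry)\le d(Rx,L^{n}x)+d(L^{n}x,L^{n}y)+d(L^{n}y,Ry)\le \frac{2c}{1-\gamma}\,\gamma^{n}+k^{n}\,d(x,y).
\]
The heart of the argument, and the step I expect to be most delicate, is to optimize this bound over $n\in\mathbb{N}$. Writing $t=d(x,y)$, the two terms trade off a decaying factor $\gamma^{n}$ against a growing factor $k^{n}$; balancing them by choosing $n$ near $\tfrac{1}{\ln(k/\gamma)}\ln\tfrac{1}{t}$ makes both comparable to $t^{\alpha}$ with
\[
\alpha=\frac{-\ln\gamma}{\ln k-\ln\gamma}\in(0,1].
\]
The interesting case is $k>1$, where $\alpha<1$; for $k\le 1$ one simply lets $n\to\infty$ directly, obtaining a Lipschitz (indeed constant, if $k<1$) retraction. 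This produces $d(Rx,Ry)\le M\,d(x,y)^{\alpha}$, i.e. H\"{o}lder continuity with exponent $\alpha$.

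The remaining care is bookkeeping. Since $n$ must be a nonnegative integer, I would take $n=\lceil \beta\ln(1/t)\rceil$ with $\beta=1/\ln(k/\gamma)$; rounding up costs only a multiplicative constant (at most a factor $k$ on the $k^{n}t$ term and nothing on the $\gamma^{n}$ term), so it leaves the exponent $\alpha$ unchanged. Because the optimal $n$ is positive only for small $t$, the regime where $t$ is bounded away from $0$ is absorbed into the constant using that $X$ is bounded: there $d(Rx,Ry)\le \diam X$ while $d(x,y)^{\alpha}$ stays bounded below by a positive number. Collecting these estimates yields a single H\"{o}lder constant $M$ valid on all of $X$, which completes the proof.
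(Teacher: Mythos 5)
Your proof is correct and follows essentially the same route as the paper: the tail estimate $d(Rx,L^{n}x)\le \frac{c}{1-\gamma}\gamma^{n}$, the three-term triangle inequality $d(Rx,Ry)\le \frac{2c}{1-\gamma}\gamma^{n}+k^{n}d(x,y)$, and then balancing the two terms by taking $n$ of order $\ln(1/d(x,y))/\ln(k/\gamma)$, which yields exactly the paper's exponent (the paper's condition $k\le\gamma^{1-\alpha^{-1}}$ is equivalent to your $\alpha=\frac{-\ln\gamma}{\ln k-\ln\gamma}$). The only difference is cosmetic bookkeeping: the paper normalizes $\diam X=1$ and solves $\gamma^{n-r}=d(x,y)^{\alpha}$ with $n\in\mathbb{N}$, $0<r\le 1$, where you use a ceiling and absorb the large-distance regime via boundedness.
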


\begin{proof}
We may assume that $\diam X=1$. Notice that for every $x\in X$ and $n,m\in
\mathbb{N},$%
\begin{equation*}
d(L^{n+m}x,L^{n}x)\leq c\frac{\gamma ^{n}}{1-\gamma }.
\end{equation*}%
Hence, for every $x,y\in X,$%
\begin{equation*}
d(Rx,Ry)\leq d(Rx,L^{n}x)+d(L^{n}x,L^{n}y)+d(L^{n}y,Ry)\leq 2c\frac{\gamma
^{n}}{1-\gamma }+k^{n}d(x,y).
\end{equation*}%
Take $\alpha <1$ such that $k\leq \gamma ^{1-\alpha ^{-1}}$ and fix $x,y\in
X,$ $x\neq y.$ Then, there exist $n\in \mathbb{N}$ and $0<r\leq 1$ such that
$\gamma ^{n-r}=d(x,y)^{\alpha }.$ Furthermore, $k^{n-1}\leq (\gamma
^{1-\alpha ^{-1}})^{n-r}.$ It follows that
\begin{equation*}
d(Rx,Ry)\leq 2c\frac{\gamma ^{n-r}}{1-\gamma }+k(\gamma ^{n-r})^{1-\alpha
^{-1}}d(x,y)=(\frac{2c}{1-\gamma }+k)d(x,y)^{\alpha }.
\end{equation*}
\end{proof}

Let $S$ be a semigroup and $\ell ^{\infty }(S)$ the Banach space of bounded
real valued functions on $S$ with the supremum norm. For $s\in S$ and $f\in
\ell ^{\infty }(S),$ we define elements $l_{s}f$ and $r_{s}f$ in $\ell
^{\infty }(S)$ by%
\begin{equation*}
l_{s}f(t)=f(st),\ r_{s}f(t)=f(ts)
\end{equation*}%
for every $t\in S.$ An element $\mu $ of $(\ell ^{\infty }(S))^{\ast }$ is
said to be a mean on $X$ if $\left\Vert \mu \right\Vert =\mu (I_{S})=1,$
where $I_{S}(t)=1$ for all $t\in S.$ It is well known that $\mu $ is a mean
if and only if
\begin{equation*}
\inf_{t\in S}f(t)\leq \mu (f)\leq \sup_{t\in S}f(t)
\end{equation*}%
for each $f\in \ell ^{\infty }(S).$ A mean $\mu $ on $\ell ^{\infty }(S)$ is
said to be left (resp. right) invariant if $\mu (l_{s}f)=\mu (f)$ (resp. $%
\mu (r_{s}f)=\mu (f)$) for each $s\in S$ and $f\in \ell ^{\infty }(S).$ A
Banach limit is a special case of an invariant mean, where $S$ equals the
semigroup of natural numbers. A semigroup $S$ is called left (resp. right)
amenable if there exists a left (resp. right) invariant mean on $\ell
^{\infty }(S).$ The term \textquotedblleft amenable
semigroup\textquotedblright\ was coined by M. M. Day in his celebrated paper
\cite{Da}.

Let $C$ be a bounded closed convex subset of a Hilbert space $H.$ For
simplicity, we consider only a real Hilbert space. A family $\mathcal{S}%
=\left\{ T_{t}:t\in S\right\} $ of mappings from $C$ into $C$ is said to be
a uniformly $k$-Lipschitzian semigroup on $C$ if $T_{ts}x=T_{t}\,T_{s}x$ and
\begin{equation*}
\left\Vert T_{t}x-T_{t}y\right\Vert \leq k\left\Vert x-y\right\Vert
\end{equation*}%
for all $t,s\in S$ and $x\in C.$

The following construction is well known (see, e.g., \cite{Ar, Ta}). Let $%
\mu $ be a mean on $\ell ^{\infty }(S),x\in C$, and consider a functional $%
F(y)=\int \left\langle T_{t}x,y\right\rangle d\mu (t),y\in H,$ where $\int
\left\langle T_{t}x,y\right\rangle d\mu (t)$ denotes the value of $\mu $ at
the function $t\rightarrow \left\langle T_{t}x,y\right\rangle .$ It is not
difficult to see that $F$ is linear and continuous since%
\begin{equation*}
\left\vert \int \left\langle T_{t}x,y\right\rangle d\mu (t)\right\vert \leq
\sup_{t\in S}\left\vert \left\langle T_{t}x,y\right\rangle \right\vert \leq
\sup_{t\in S}\left\Vert T_{t}x\right\Vert \left\Vert y\right\Vert .
\end{equation*}%
By the Riesz theorem, there exists a unique element $\bar{x}$ such that
\begin{equation}
\int \left\langle T_{t}x,y\right\rangle d\mu (t)=\left\langle \bar{x}%
,y\right\rangle  \label{riesz}
\end{equation}%
for every $y\in H.$ Furthermore, by the separation theorem, $\bar{x}\in C.$
Thus we obtain a mapping $\bar{T}_{\mu }:C\rightarrow C$ by putting $\bar{T}%
_{\mu }x=\bar{x}.$ Notice that if a semigroup $\mathcal{S}=\left\{
T_{t}:t\in S\right\} $ is uniformly $k$-Lipschitzian, then%
\begin{equation*}
\left\langle \bar{T}_{\mu }x-\bar{T}_{\mu }y,v\right\rangle =\int
\left\langle T_{t}x-T_{t}y,v\right\rangle d\mu (t)\leq \sup_{t\in
S}\left\Vert T_{t}x-T_{t}y\right\Vert \left\Vert v\right\Vert \leq
k\left\Vert x-y\right\Vert \left\Vert v\right\Vert
\end{equation*}%
for every $x,y\in C,v\in H$, and hence
\begin{equation*}
\left\Vert \bar{T}_{\mu }x-\bar{T}_{\mu }y\right\Vert \leq k\left\Vert
x-y\right\Vert
\end{equation*}%
for every $x,y\in C$, i.e., $\bar{T}_{\mu }$ is $k$-Lipschitz.

The above notions may be generalized to the case of semitopological
semigroups. Recall that $S$ is a topological semigroup if there exists a
Hausdorff topology on $S$ such that the mapping $S\times S\ni
(s,t)\rightarrow st$ is (jointly) continuous. A semigroup $S$ is
semitopological if for each $t\in S$ the mappings $S\ni s\rightarrow ts$ and
$S\ni s\rightarrow st$ are continuous. Notice that every semigroup can be
equipped with the discrete topology and then it is called a discrete
semigroup. Let $X$ be a linear closed subspace of $\ell ^{\infty }(S)$
containing $I_{S}$ such that $l_{s}(X)\subset X$ and $r_{s}(X)\subset X$ for
each $s\in S.$ Let $X^{\ast }$ be its topological dual. Then $X$ is said to
be left (resp. right) amenable if there exists a left (resp. right)
invariant mean on $X.$

The following theorem is a qualitative version of (a slight generalization
of) Theorem 1 in \cite{IsTa}.

\begin{theorem}
\label{H}Let $C$ be a nonempty bounded closed convex subset of a Hilbert
space $H$ and $\mathcal{S}=\left\{ T_{t}:t\in S\right\} $ a uniformly $k$%
-Lipschitzian semigroup on $C.$ Suppose that $X$ is a left amenable subspace
of $\ell ^{\infty }(S)$ such that the functions $g(t)=\left\langle
T_{t}x,y\right\rangle $ and $h(t)=\left\Vert T_{t}x-y\right\Vert ^{2}$
belong to $X$ for each $x\in C$ and $y\in H.$ If $k<\sqrt{2},$ then $%
\mathcal{S}$ has a fixed point in $C$ and $\Fix\mathcal{S}=\bigcap_{t\in
S}\left\{ x\in C:T_{t}x=x\right\} $ is a H\"{o}lder continuous retract of $%
C. $
\end{theorem}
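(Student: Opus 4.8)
The plan is to fix a left invariant mean $\mu$ on $X$ and to work with the averaged mapping $\bar{T}_{\mu}\colon C\to C$ constructed above, which is $k$-Lipschitz and maps $C$ into $C$. Writing $\bar{x}=\bar{T}_{\mu}x$ and $r(x)=\int\left\Vert T_{t}x-\bar{x}\right\Vert^{2}d\mu(t)$, the first step is the Hilbert-space ``variance'' identity
\[\int\left\Vert T_{t}x-y\right\Vert^{2}d\mu(t)=r(x)+\left\Vert \bar{x}-y\right\Vert^{2},\qquad y\in H,\]
which follows by expanding the square and using (\ref{riesz}) to kill the cross term $\int\left\langle T_{t}x-\bar{x},\bar{x}-y\right\rangle d\mu(t)=0$; in particular $\bar{x}$ is the unique minimiser of $y\mapsto\int\left\Vert T_{t}x-y\right\Vert^{2}d\mu(t)$. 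The functions involved lie in $X$ by hypothesis, so the mean may be applied throughout.

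Next I would derive the contraction estimate that uses $k<\sqrt{2}$. Fix $s\in S$ and evaluate the variance identity at $y=T_{s}\bar{x}$; on the other hand, since $t\mapsto\left\Vert T_{t}x-T_{s}\bar{x}\right\Vert^{2}$ lies in $X$, left invariance of $\mu$ together with $T_{st}=T_{s}T_{t}$, the $k$-Lipschitz property of $T_{s}$, and positivity of the mean give
\[\int\left\Vert T_{t}x-T_{s}\bar{x}\right\Vert^{2}d\mu(t)=\int\left\Vert T_{st}x-T_{s}\bar{x}\right\Vert^{2}d\mu(t)\leq k^{2}\int\left\Vert T_{t}x-\bar{x}\right\Vert^{2}d\mu(t)=k^{2}r(x).\]
Comparing the two evaluations yields $\left\Vert T_{s}\bar{x}-\bar{x}\right\Vert^{2}\leq(k^{2}-1)r(x)$ for every $s\in S$. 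Since $\bar{T}_{\mu}\bar{x}$ minimises $y\mapsto\int\left\Vert T_{t}\bar{x}-y\right\Vert^{2}d\mu(t)$, taking $y=\bar{x}$ and using the pointwise bound $\left\Vert T_{t}\bar{x}-\bar{x}\right\Vert^{2}\leq(k^{2}-1)r(x)$ gives $r(\bar{x})\leq(k^{2}-1)r(x)$. Setting $\gamma=\sqrt{k^{2}-1}$, the hypothesis $k<\sqrt{2}$ is exactly what forces $\gamma<1$.

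Writing $x_{n}=\bar{T}_{\mu}^{n}x$, iteration of the last inequality gives $r(x_{n})\leq\gamma^{2n}r(x_{0})$ with $r(x_{0})\leq(\diam C)^{2}$. Evaluating the variance identity at $x=x_{n}$, $y=x_{n}$, and inserting the bound $\left\Vert T_{t}x_{n}-x_{n}\right\Vert^{2}\leq(k^{2}-1)r(x_{n-1})$ from the previous step, I obtain
\[\left\Vert x_{n+1}-x_{n}\right\Vert^{2}\leq\int\left\Vert T_{t}x_{n}-x_{n}\right\Vert^{2}d\mu(t)\leq(k^{2}-1)\,r(x_{n-1})\leq\gamma^{2n}(\diam C)^{2}.\]
Hence $\left\Vert \bar{T}_{\mu}^{n+1}x-\bar{T}_{\mu}^{n}x\right\Vert\leq(\diam C)\gamma^{n}$ uniformly in $x\in C$, so Lemma \ref{holder} applies to the $k$-Lipschitz mapping $L=\bar{T}_{\mu}$ and produces a H\"{o}lder continuous mapping $Rx=\lim_{n\rightarrow\infty}\bar{T}_{\mu}^{n}x$ on $C$.

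Finally I would identify $Rx$ and verify the retraction property. The estimate $\sup_{s\in S}\left\Vert T_{s}x_{n}-x_{n}\right\Vert^{2}\leq(k^{2}-1)r(x_{n-1})\to0$ shows that the common displacement of the iterates vanishes; combined with $x_{n}\to Rx$ and the $k$-Lipschitz continuity of each $T_{s}$, a three-term estimate gives $T_{s}(Rx)=Rx$ for every $s\in S$, so $Rx\in\Fix\mathcal{S}$ and in particular $\Fix\mathcal{S}\neq\emptyset$. Conversely, if $p\in\Fix\mathcal{S}$ then (\ref{riesz}) forces $\bar{T}_{\mu}p=p$, whence $\bar{T}_{\mu}^{n}p=p$ and $Rp=p$. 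Thus $R\colon C\to\Fix\mathcal{S}$ is a H\"{o}lder continuous retraction, as claimed. I expect the main obstacle to be the contraction estimate of the second paragraph: it is precisely there that the Hilbert-space variance identity and the left invariance of $\mu$ must be combined, and the threshold $k<\sqrt{2}$ enters exactly through the requirement $k^{2}-1<1$.
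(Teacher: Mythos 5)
Your proof is correct, and its skeleton coincides with the paper's: both average the semigroup through a left invariant mean to obtain the $k$-Lipschitz map $\bar{T}_{\mu}$, both establish the variance identity and the pointwise estimate $\left\Vert T_{s}\bar{T}_{\mu}x-\bar{T}_{\mu}x\right\Vert ^{2}\leq (k^{2}-1)\int \left\Vert T_{t}x-\bar{T}_{\mu}x\right\Vert ^{2}d\mu (t)$ via left invariance of $\mu$, and both feed the resulting geometric decay of the iterates into Lemma \ref{holder}. The differences are in the bookkeeping and, more substantially, in the endgame. Bookkeeping: you iterate the variance $r(x_{n})$ while the paper iterates the displacement $\int \left\Vert T_{t}x_{n}-x_{n}\right\Vert ^{2}d\mu (t)$; and your bound $\left\Vert x_{n+1}-x_{n}\right\Vert ^{2}\leq \int \left\Vert T_{t}x_{n}-x_{n}\right\Vert ^{2}d\mu (t)$ falls directly out of the variance identity, whereas the paper uses $\left\Vert a+b\right\Vert ^{2}\leq 2\left\Vert a\right\Vert ^{2}+2\left\Vert b\right\Vert ^{2}$ and picks up a factor $4$ --- equivalent in effect, yours marginally sharper. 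The genuine divergence is the verification that $Rx\in \Fix \mathcal{S}$. The paper first proves $\int \left\Vert T_{t}Rx-Rx\right\Vert ^{2}d\mu (t)=0$ (via the three-vector consequence of the parallelogram law) and then, because a mean can annihilate a nonzero nonnegative function, must run a second left-invariance argument to upgrade this to pointwise vanishing $T_{s}Rx=Rx$ for each $s$. You avoid that step entirely: since your contraction estimate is pointwise in $s$ \emph{before} integration, you have $\sup_{s\in S}\left\Vert T_{s}x_{n}-x_{n}\right\Vert ^{2}\leq (k^{2}-1)r(x_{n-1})\rightarrow 0$, and a triangle inequality combined with $x_{n}\rightarrow Rx$ and the $k$-Lipschitz continuity of $T_{s}$ finishes the proof. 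This is simpler and more transparent; the paper's route would still function if only mean-square (rather than uniform-in-$s$) displacement decay were available, but here the stronger pointwise information comes for free and you are right to exploit it.
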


\begin{proof}
Let $\mu $ be a left invariant mean $\mu $ on $X.$ Then there exists a
mapping $\bar{T}_{\mu }:C\rightarrow C$ such that%
\begin{equation*}
\int \left\langle T_{t}x,y\right\rangle d\mu (t)=\left\langle \bar{T}_{\mu
}x,y\right\rangle
\end{equation*}%
for every $y\in H$ (see (\ref{riesz})). Following \cite[Th. 1]{IsTa}, fix $%
x_{0}\in C$ and put $x_{n+1}=\bar{T}_{\mu }x_{n},n=0,1,....$ Then%
\begin{equation*}
\left\Vert T_{t}x_{n}-y\right\Vert ^{2}=\left\Vert
T_{t}x_{n}-x_{n+1}\right\Vert ^{2}+\left\Vert x_{n+1}-y\right\Vert
^{2}+2\left\langle T_{t}x_{n}-x_{n+1},x_{n+1}-y\right\rangle ,
\end{equation*}%
\begin{equation*}
\int \left\langle T_{t}x_{n}-x_{n+1},x_{n+1}-y\right\rangle d\mu
(t)=\left\langle \bar{T}_{\mu }x_{n}-x_{n+1},x_{n+1}-y\right\rangle =0,
\end{equation*}%
and hence
\begin{equation*}
\int \left\Vert T_{t}x_{n}-y\right\Vert ^{2}d\mu (t)=\int \left\Vert
T_{t}x_{n}-x_{n+1}\right\Vert ^{2}d\mu (t)+\left\Vert x_{n+1}-y\right\Vert
^{2}
\end{equation*}%
for every $y\in H$ and $n=0,1,....$ It follows (writing $y=x_{n}$) that%
\begin{equation}
\int \left\Vert T_{t}x_{n}-x_{n+1}\right\Vert ^{2}d\mu (t)\leq \int
\left\Vert T_{t}x_{n}-x_{n}\right\Vert ^{2}d\mu (t).  \label{iq1}
\end{equation}%
Furthermore, taking $y=T_{s}x_{n+1},$ we have
\begin{align*}
\left\Vert T_{s}x_{n+1}-x_{n+1}\right\Vert ^{2}& =\int \left\Vert
T_{t}x_{n}-T_{s}x_{n+1}\right\Vert ^{2}d\mu (t)-\int \left\Vert
T_{t}x_{n}-x_{n+1}\right\Vert ^{2}d\mu (t) \\
& =\int \left\Vert T_{st}x_{n}-T_{s}x_{n+1}\right\Vert ^{2}d\mu (t)-\int
\left\Vert T_{t}x_{n}-x_{n+1}\right\Vert ^{2}d\mu (t) \\
& \leq (k^{2}-1)\int \left\Vert T_{t}x_{n}-x_{n+1}\right\Vert ^{2}d\mu (t)
\end{align*}%
for any $s\in S,$ since $\mu $ is left invariant and $\left\Vert
T_{st}x_{n}-T_{s}x_{n+1}\right\Vert \leq k\left\Vert
T_{t}x_{n}-x_{n+1}\right\Vert .$ Thus%
\begin{equation}
\int \left\Vert T_{s}x_{n+1}-x_{n+1}\right\Vert ^{2}d\mu (s)\leq
(k^{2}-1)\int \left\Vert T_{t}x_{n}-x_{n+1}\right\Vert ^{2}d\mu (t).
\label{iq2}
\end{equation}%
Combining (\ref{iq1}) with (\ref{iq2}) yields
\begin{equation*}
\int \left\Vert T_{t}x_{n+1}-x_{n+1}\right\Vert ^{2}d\mu (t)\leq
(k^{2}-1)\int \left\Vert T_{t}x_{n}-x_{n}\right\Vert ^{2}d\mu (t).
\end{equation*}%
for $n=0,1,....$ Hence%
\begin{align*}
\left\Vert \bar{T}_{\mu }^{n+1}x_{0}-\bar{T}_{\mu }^{n}x_{0}\right\Vert
^{2}& =\left\Vert x_{n+1}-x_{n}\right\Vert ^{2}\leq 2\int \left\Vert
x_{n+1}-T_{t}x_{n}\right\Vert ^{2}d\mu (t) \\
+2\int \left\Vert T_{t}x_{n}-x_{n}\right\Vert ^{2}d\mu (t)& \leq 4\int
\left\Vert T_{t}x_{n}-x_{n}\right\Vert ^{2}d\mu (t)\leq 4(k^{2}-1)^{n}\diam C
\end{align*}%
for every $x_{0}\in C$ and $n=0,1,....$ Since $\bar{T}_{\mu }$ is $k$%
-Lipschitz and $k<\sqrt{2}$, it follows from Lemma \ref{holder} that $%
Rx=\lim_{n\rightarrow \infty }\bar{T}_{\mu }^{n}x$ is H\"{o}lder continuous
on $C$. We show that $R$ is a retraction onto $\Fix\mathcal{S}.$ It is clear
that if $x\in \Fix\mathcal{S},$ then $Rx=\bar{T}_{\mu }x=x.$ Furthermore, it
follows from the generalized parallelogram law that for every $x,y,z\in H,$
\begin{align*}
\left\Vert x+y+z\right\Vert ^{2}& =\left\Vert x+y\right\Vert ^{2}+\left\Vert
y+z\right\Vert ^{2}+\left\Vert z+x\right\Vert ^{2}-\left\Vert x\right\Vert
^{2}-\left\Vert y\right\Vert ^{2}-\left\Vert z\right\Vert ^{2} \\
& \leq 2(\left\Vert x\right\Vert ^{2}+\left\Vert y\right\Vert
^{2})+2(\left\Vert y\right\Vert ^{2}+\left\Vert z\right\Vert
^{2})+2(\left\Vert z\right\Vert ^{2}+\left\Vert x\right\Vert ^{2}) \\
& -\left\Vert x\right\Vert ^{2}-\left\Vert y\right\Vert ^{2}-\left\Vert
z\right\Vert ^{2}=3(\left\Vert x\right\Vert ^{2}+\left\Vert y\right\Vert
^{2}+\left\Vert z\right\Vert ^{2}).
\end{align*}%
Therefore, for every $x\in C$ and $n=0,1,...,$%
\begin{eqnarray*}
\int \Vert T_{t}Rx-Rx\Vert ^{2}d\mu (t) &\leq &3\int (\left\Vert
T_{t}Rx-T_{t}\bar{T}_{\mu }^{n}x\right\Vert ^{2}+\left\Vert T_{t}\bar{T}%
_{\mu }^{n}x-\bar{T}_{\mu }^{n}x\right\Vert ^{2} \\
+\left\Vert \bar{T}_{\mu }^{n}x-Rx\right\Vert ^{2})d\mu (t) &\leq
&3(k^{2}+1)\left\Vert Rx-\bar{T}_{\mu }^{n}x\right\Vert ^{2}+3\mu
_{t}\left\Vert T_{t}\bar{T}_{\mu }^{n}x-\bar{T}_{\mu }^{n}x\right\Vert ^{2}.
\end{eqnarray*}%
Letting $n$ go to infinity, $\int \Vert T_{t}Rx-Rx\Vert ^{2}d\mu (t)=0.$
Therefore, for each $x\in C$ and $s\in S,$%
\begin{align*}
\Vert T_{s}Rx-Rx\Vert ^{2}& \leq 2\int \Vert T_{s}Rx-T_{t}Rx\Vert ^{2}d\mu
(t)+2\int \Vert T_{t}Rx-Rx\Vert ^{2}d\mu (t) \\
& =2\int \Vert T_{s}Rx-T_{st}Rx\Vert ^{2}d\mu (t)\leq 2k^{2}\int \Vert
Rx-T_{t}Rx\Vert ^{2}d\mu (t)=0.
\end{align*}%
Thus $Rx\in \Fix\mathcal{S}$ for every $x\in C$ and the proof is complete.
\end{proof}

\begin{remark}
Notice that a detailed analysis of the proof of Lemma \ref{holder} gives the
estimation $\alpha =1/(1-\log _{k^{2}-1}k)$ for the exponent and $c=k+8\diam %
C/(2-k^{2})$ for the constant of the H\"{o}lder continuous retraction $R,$ $%
1<k<\sqrt{2}.$
\end{remark}

In particular, the above theorem is applicable to commutative semigroups
since every commutative semigroup is amenable.

\begin{example}
Let $G$ be an unbounded subset of $[0,\infty )$ such that $t+s,t-s\in G$ for
all $t,s\in G$ with $t>s$ (e.g., $G=[0,\infty )$ or $G=\mathbb{N}$). Suppose
that $\mathcal{T}=\{T_{t}:t\in G\}$ is a family of uniformly $k$%
-Lipschitzian mappings from $C$ into $C$ such that $T_{t+s}x=T_{t}\,T_{s}x$
for all $t,s\in G$ and $x\in C$. If $\sup_{t\in G}|T_{t}|<\sqrt{2},$ where $%
|T_{t}|=\inf \{c>0:\Vert T_{t}x-T_{t}y\Vert \leq c\Vert x-y\Vert \ $for\ all$%
\ x,y\in C\}$ denotes the Lipschitz constant of a mapping $T_{t},$ then $\Fix%
\mathcal{T}=\bigcap_{t\in G}\left\{ x\in C:T_{t}x=x\right\} $ is a H\"{o}%
lder continuous retract of $C.$ Notice that we do not assume that the
semigroup $\mathcal{T}$ is continuous in $t$ (i.e., strongly continuous in
the terminology of $C_{0}$-semigroups).
\end{example}

Now consider a more general case.

\begin{example}
Let $S$ be a semitopological semigroup and let $CB(S)$ denote the closed
subalgebra of $\ell ^{\infty }(S)$ consisting of continuous functions. Let $%
LUC(S)$ (resp. $RUC(S)$) be the space of left (resp. right) uniformly
continuous functions on $S,$ i.e., all $f\in CB(S)$ such that the mapping $%
S\ni s\rightarrow l_{s}f$ (resp. $s\rightarrow r_{s}f$) from $S$ to $CB(S)$
is continuous when $CB(S)$ has the sup norm topology. It is known (see \cite%
{Mi}) that $LUC(S)$ and $RUC(S)$ are left and right translation invariant
closed subalgebras of $CB(S)$ containing constants. Notice that if a
uniformly $k$-Lipschitzian semigroup $\mathcal{S}=\left\{ T_{t}:t\in
S\right\} $ is continuous in $t$ on $C$ (i.e., the mapping $S\ni
t\rightarrow T_{t}x$ is continuous for each $x\in C$), then the functions $%
g(t)=\left\langle T_{t}x,y\right\rangle $ and $h(t)=\left\Vert
T_{t}x-y\right\Vert ^{2}$ belong to $RUC(S)$ for every $x\in C$ and $y\in H.$
Indeed (see \cite{La}),%
\begin{align*}
\left\Vert r_{s}g-r_{u}g\right\Vert & =\sup_{t\in S}\left\vert
(r_{s}g)(t)-(r_{u}g)(t)\right\vert =\sup_{t\in S}\left\vert
g(ts)-g(tu)\right\vert \\
& =\sup_{t\in S}\left\vert \left\langle
T_{t}T_{s}x-T_{t}T_{u}x,y\right\rangle \right\vert \leq k\left\Vert
T_{s}x-T_{u}x\right\Vert \left\Vert y\right\Vert
\end{align*}%
and%
\begin{align*}
\left\Vert r_{s}h-r_{u}h\right\Vert & =\sup_{t\in S}\left\vert
h(ts)-h(tu)\right\vert =\sup_{t\in S}\left\vert \left\Vert
T_{ts}x-y\right\Vert ^{2}-\left\Vert T_{tu}x-y\right\Vert ^{2}\right\vert \\
& =\sup_{t\in S}\left\vert (\left\Vert T_{ts}x-y\right\Vert +\left\Vert
T_{tu}x-y\right\Vert )(\left\Vert T_{ts}x-y\right\Vert -\left\Vert
T_{tu}x-y\right\Vert \right\vert \\
& \leq 2\sup_{t\in S}\left\Vert T_{t}x-y\right\Vert \sup_{t\in S}\left\Vert
T_{t}T_{s}x-T_{t}T_{u}x\right\Vert \\
& \leq 2k\sup_{t\in S}\left\Vert T_{t}x-y\right\Vert \left\Vert
T_{s}x-T_{u}x\right\Vert
\end{align*}%
for any $s,u\in S,x\in C$ and $y\in H.$ Thus Theorem \ref{H} is applicable
with $X=RUC(S)$ if we assume that $\mathcal{S}=\left\{ T_{t}:t\in S\right\} $
is (separately) continuous.
\end{example}

Recall that a semitopological semigroup $S$ is said to be left reversible if
any two closed right ideals of $S$ have a non-void intersection. In this
case $(S,\leq )$ is a directed set with the relation $a\leq b$ iff $\left\{
a\right\} \cup \overline{aS}\supset \left\{ b\right\} \cup \overline{bS}.$
In general, the conditions \textquotedblleft $RUC(S)$ has a left invariant
mean\textquotedblright\ and \textquotedblleft $S$ is left
reversible\textquotedblright\ are independent of each other. However, if $%
RUC(S)$ has sufficiently many functions to separate closed sets, then the
former condition implies the latter (see, e.g., \cite[Lemma 3.1]{LaTa}).

Recently, G\'{o}rnicki (cf. \cite[Cor. 14]{Gosem}) proved that if $S$ is
left reversible and $\mathcal{S}=\left\{ T_{t}:t\in S\right\} $ is a
uniformly $k$-Lipschitzian semigroup on $C,$ then the set of fixed points of
$\mathcal{S}$ is a retract of $C.$ It is not clear how to extend Theorem \ref%
{H} to left reversible semigroups. There are many other open problems in
this area. Perhaps the most natural is the following.

\begin{problem}
Under the assumptions of Theorem \ref{GoKi}, is the fixed-point set $\Fix%
\mathcal{S}$ a Lipschitz or H\"{o}lder continuous retract of $C$?
\end{problem}

\end{document}